\definecolor{myblue}{rgb}{0.09,0.32,0.44} 
\newtheorem{thm}{Theorem}[section] 
\newtheorem*{thm*}{Theorem}
\newtheorem{defn}[thm]{Definition}
\newtheorem{lem}[thm]{Lemma}
\theoremstyle{remark}
\newtheorem*{rem*}{Remark}
\newtheorem*{rems*}{Remarks}
\newcommand\Cref[1]{{Corollary~\ref{#1}}}
\newcommand{\Heeo}{H(\substack{e,e_0\\w,v})}
\DeclareMathOperator{\Stab}{Stab}
\DeclareMathOperator{\Aut}{Aut}
\def\moverlay{\mathpalette\mov@rlay}
\def\mov@rlay#1#2{\leavevmode\vtop{%
   \baselineskip\z@skip \lineskiplimit-\maxdimen
   \ialign{\hfil$\m@th#1##$\hfil\cr#2\crcr}}}
\newcommand{\charfusion}[3][\mathord]{
    #1{\ifx#1\mathop\vphantom{#2}\fi
        \mathpalette\mov@rlay{#2\cr#3}
      }
    \ifx#1\mathop\expandafter\displaylimits\fi}
\newlength{\tempindent} 
\newcommand{\lazyenum}{
\setlength{\tempindent}{\parindent} 
\begin{enumerate}[leftmargin=0cm,itemindent=0.7cm,labelwidth=\itemindent,labelsep=0cm,align=left,label=(\arabic*)]
\setlength{\parskip}{\smallskipamount}
\setlength{\parindent}{\tempindent}
}
\newif
\title{Lipschitz harmonic functions on vertex-transitive graphs}
\author{Gideon Amir}
\email{gidi.amir@gmail.com}
\address{Bar-Ilan University, Ramat Gan 52900, Israel}
\author{Guy Blachar}
\email{guy.blachar@gmail.com}
\address{Bar-Ilan University, Ramat Gan 52900, Israel}
\author{Maria Gerasimova}
\email{mari9gerasimova@mail.ru}
\address{West\"{a}lische Wilhelms-Universit\"{a}t M\"{u}nster, 48149 M\"{u}nster, Germany}
\author{Gady Kozma}
\email{gady.kozma@weizmann.ac.il}
\address{The Weizmann Institute of Science, Rehovot 76100, Israel}
\begin{document}

\maketitle
\vspace{-2.5em} 
\begin{abstract}
We prove that every locally finite vertex-transitive graph $G$ admits a non-constant
Lipschitz harmonic function. 

\end{abstract}

\section{Introduction}

In this paper we are interested in the space of Lipschitz harmonic functions on a graph (precise definitions will be given below). The structure of the space of such functions played a crucial role in Kleiner's proof of Gromov's theorem \cite{Kleiner, Shalom-Tao, TaoBlog} and has thus attracted some attention \cite{Tointon, Meyerovitch-Yadin}.
In particular, Kleiner showed that any Cayley graph supports a non-trivial Lipschitz harmonic function. Our purpose in this paper is to generalize this fact to vertex-transitive graphs. Any Cayley graph is vertex-transitive, but the opposite is not true. In fact, there exist vertex-transitive graphs which are quite far from Cayley graphs in a precise sense \cite{Eskin}. Generalizing results from Cayley graphs to vertex-transitive graphs is sometimes challenging, see for example \cite{Trofimov}.

In this note we prove the following theorem, answering a problem of Georgakopoulos and Wendland \cite[Problem 1.1]{GW} (who also proved a partial result, see \cite[Proposition 3.1]{GW}). Another partial result was proved much earlier by Trofimov \cite{T98}, who showed that any infinite vertex-transitive graph supports a non-constant harmonic function, but with weaker control on the growth of the function.
\begin{thm}\label{t:main}
Every infinite, locally finite vertex-transitive graph $G$ admits a non-constant Lipschitz harmonic function.
\end{thm}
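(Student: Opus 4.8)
My plan is to split according to whether $G$ supports a non-constant bounded harmonic function. If it does, say $h$, then since a locally finite vertex-transitive graph has bounded degree we have $|h(x)-h(y)|\le 2\|h\|_\infty$ on every edge, so $h$ is automatically Lipschitz and we are done. Hence we may assume $G$ is Liouville. A standard argument (Kesten's amenability criterion together with Carne--Varopoulos shows that a non-amenable transitive graph has positive speed, hence a non-trivial Poisson boundary, hence a non-constant bounded harmonic function) then forces $G$ to be amenable, and an amenable vertex-transitive graph has a transitive \emph{unimodular} group of automorphisms; we record this so that the mass-transport principle is available. Fix a basepoint $o$, write $B_r=B(o,r)$, and let $P$ denote the simple-random-walk operator.

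For the remaining (Liouville) case I would run a compactness argument on finite balls. Call $u\colon B_r\to\R$ harmonic in $B_r$ if $(I-P)u=0$ on $B_{r-1}$, put $\|\nabla u\|=\max\{|u(x)-u(y)|:\{x,y\}\in E,\ x,y\in B_r\}$, and among all such $u$ with $\|\nabla u\|\le 1$ and $u(o)=0$ choose $u_r$ maximising $\operatorname{osc}_{B_{r/2}}u$; set $D_r:=\operatorname{osc}_{B_{r/2}}u_r$ (the maximum is attained, the feasible set being a compact subset of $\R^{B_r}$). The statement I would aim to establish is a non-degeneracy estimate for these extremal functions:
\begin{equation}\label{eq:crux}
D_r\to\infty\quad(r\to\infty),\qquad \sum_{x\in B_{r/4}}\ \sum_{y\sim x}\bigl(u_r(y)-u_r(x)\bigr)^2\ \ge\ c\,|B_{r/4}|
\end{equation}
for a constant $c=c(G)>0$ and all large $r$; the second, averaged, assertion is what actually survives a limit. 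Granting \eqref{eq:crux}: by pigeonhole there is a vertex $z_r\in B_{r/4}$ with $\sum_{y\sim z_r}(u_r(y)-u_r(z_r))^2\ge c$, hence a neighbour $w_r$ of $z_r$ with $|u_r(w_r)-u_r(z_r)|\ge\sqrt{c/\deg G}$; choosing $\gamma_r\in\Aut(G)$ with $\gamma_r z_r=o$ and setting $v_r:=u_r\circ\gamma_r^{-1}-(u_r\circ\gamma_r^{-1})(o)$, we obtain a $1$-Lipschitz function vanishing at $o$, harmonic on $\gamma_r(B_{r-1})\supseteq B(o,r/4-1)$, and with a neighbour of $o$ at which $|v_r|\ge\sqrt{c/\deg G}$. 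Since the $v_r$ are $1$-Lipschitz and normalised at $o$, the family is uniformly bounded on every fixed ball; Arzel\`a--Ascoli plus a diagonal extraction (refined so that the distinguished neighbour of $o$ is eventually a fixed $w$) yields a pointwise limit $h$ that is harmonic on all of $G$, is $1$-Lipschitz, has $h(o)=0$, and has $|h(w)|\ge\sqrt{c/\deg G}>0$: a non-constant Lipschitz harmonic function, as desired.

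The crux, and the step I expect to be the main obstacle, is \eqref{eq:crux}: one must rule out that every harmonic function on $B_r$ with unit gradient is asymptotically flat on the inner ball, and this is precisely where vertex-transitivity has to be used essentially, the conclusion being false on general bounded-degree graphs. The route I would pursue is to exhibit an explicit spread-out competitor and control its harmonic replacement. Since $G$ is infinite, pick $a,b\in V(G)$ with $d(a,b)=2\lfloor r/2\rfloor$ and $o$ on a geodesic joining them, let $w:=\tfrac12\bigl(d(\cdot,a)-d(\cdot,b)\bigr)$ --- which is $1$-Lipschitz with $\operatorname{osc}_{B_{r/2}}w\asymp r$ --- and let $\widetilde u_r$ be the Dirichlet-energy minimiser on $B_r$ with boundary data $w$ on $B_r\setminus B_{r-1}$, so that $\widetilde u_r$ is harmonic in $B_{r-1}$ and $\mathcal E_{B_r}(\widetilde u_r)\le\mathcal E_{B_r}(w)\lesssim|B_r|$. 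One must then show that $\widetilde u_r$ neither collapses on a positive fraction of $B_{r/2}$ nor develops an unbounded gradient; for the former I would feed in the universal Poincar\'e inequality valid on every vertex-transitive graph, $\|f-f_{B_\rho}\|_{L^2(B_\rho)}\le C\rho\,\|\nabla f\|_{L^2(B_{C\rho})}$, together with a maximum-principle comparison of $\widetilde u_r$ and $w$ (their difference solving $(I-P)(\widetilde u_r-w)=-(I-P)w$, with right-hand side bounded by $1$ in absolute value and vanishing boundary values), and for the latter a Caccioppoli-type iteration; normalising by $\|\nabla\widetilde u_r\|$ then lands us in the class defining $D_r$, yielding the first half of \eqref{eq:crux}, while the energy bound $\mathcal E_{B_r}(\widetilde u_r)\lesssim|B_r|$ together with a reverse-Poincar\'e estimate for harmonic functions and a translation/averaging argument would be used to extract the averaged lower bound. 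Making all of this quantitative --- showing that the extremal harmonic functions on finite balls genuinely register the large scale --- is the heart of the matter, and I expect it to occupy the bulk of the proof.
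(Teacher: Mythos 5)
Your reduction to the amenable case is fine and parallels the paper (the paper handles non-amenability via positive entropy and Piaggio--Lessa; your speed/Poisson-boundary argument is the same idea), but the amenable case is where the theorem lives, and there your proposal has a genuine gap: the non-degeneracy estimate you yourself call the crux is exactly the hard part of the problem, and the route you sketch for it does not go through. First, the ``universal Poincar\'e inequality'' $\|f-f_{B_\rho}\|_{\ell^2(B_\rho)}\le C\rho\,\|\nabla f\|_{\ell^2(B_{C\rho})}$ with a constant independent of the graph's growth is false on general vertex-transitive graphs: the known Poincar\'e inequality carries a factor of order $|B_{2\rho}|/|B_\rho|$, which is unbounded precisely for the amenable graphs of exponential growth (e.g.\ lamplighters) that remain after your Liouville reduction. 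Second, the maximum-principle comparison of $\widetilde u_r$ with $w$ only yields $|\widetilde u_r-w|\le\|(I-P)w\|_{\ell^\infty}\cdot\sup_x\E_x[\tau_{B_r}]$, and the expected exit time of $B_r$ is at least of order $r^2$ on any transitive graph, which swamps the oscillation $\asymp r$ of the competitor $w$; so this gives no lower bound on the oscillation of the harmonic replacement. Third, you have no $\ell^\infty$ bound on the gradient of $\widetilde u_r$ (Caccioppoli-type arguments give only $\ell^2$ control, and no uniform elliptic gradient estimate is available in this generality), so after normalising by $\|\nabla\widetilde u_r\|_{\ell^\infty}$ the oscillation and averaged-energy lower bounds you want may simply vanish. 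In short, the finitary statement you would need is at least as hard as the theorem itself, and nothing in the sketch supplies the mechanism that makes it true.

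That missing mechanism is what the paper's proof provides, and it is worth seeing why it is different in kind. The paper does not work with harmonic extensions on balls at all: amenability plus the spectral theorem give functions $f_n$ with $\|\nabla f_n\|_{\ell^2(E)}=1$ and $\|\Delta f_n\|_{\ell^2(V)}\to0$, and the key step is an averaging of translates of $f_n$ over the sets $H_{e,e_0}\subseteq\Aut(G)$ with respect to the bi-invariant Haar measure --- bi-invariance, i.e.\ unimodularity via Soardi--Woess, is what makes $\mu(H_{a,b})=1$ and hence lets a Cauchy--Schwarz/Riesz--Thorin argument upgrade the $\ell^2$ bounds to the pointwise bounds $\|\nabla g_n\|_{\ell^\infty}\le\sqrt{\deg G}$, $\|\Delta g_n\|_{\ell^\infty}\to0$, with $\nabla g_n(e_0)\ge1/\deg G$, after which compactness finishes. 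You record unimodularity ``so that mass transport is available'' but never use it; without some such $\ell^2\to\ell^\infty$ conversion coming from the transitive unimodular group action (this is the Shalom--Tao/Kleiner device the paper adapts), your ball-by-ball extremal construction has no way to rule out that all unit-Lipschitz harmonic functions on $B_r$ are asymptotically flat in the bulk.
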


\section{Proof of Theorem \ref{t:main}}
We start with some standard definitions.
\begin{defn}For a function $f\colon V\to\mathbb{C}$ we define $\nabla f$ to be a function on the directed edges of the graph $V$ by $\nabla f(v,w)=f(v)-f(w)$. We define $\Delta f$ to be a function on $V$ by $\Delta f(v)=\sum_{w\sim v}(f(v)-f(w))$. 
A function $f\colon V \to \mathbb{C}$ on a graph $G=(V,E)$ is called harmonic if $\Delta f\equiv 0$. We will say that $f$ is Lipschitz if $\nabla f \in \ell^\infty(E).$

\end{defn}

With the theorem now completely defined we can start the proof, but,
before starting the proof, let us discuss shortly the issue of unimodularity. A locally compact group is called unimodular if its left and right Haar measures are identical, and a vertex-transitive graph is called unimodular if its automorphism group, with the topology of pointwise convergence, is unimodular. Unimodularity can play an important role in studying probability on vertex-transitive graphs, see for example, \cite[Chapter 12]{PeteBook}. Contrariwise, non-unimodular vertex-transitive graphs have the so-called modular function which can also aid in their analysis, see e.g.\ \cite{hutchcroft2020nonuniqueness}. What we will use below is the fact, first proved in \cite{Soardi-Woess}, that any amenable vertex-transitive graph is unimodular.

\begin{defn}
\label{def:amenable}
    A graph $G=(V,E)$ is called amenable if there exists a sequence of subsets $X_n\subseteq V$ such that 
    $$\lim_{n\to\infty}\frac{|\partial X_n|}{|X_n|}= 0,$$
    where $\partial X=\{v\in V\setminus X:\exists~ w\in X, w\sim v \}$ is the outer vertex boundary of $X$.
\end{defn}
Note that outer vertex boundary in this definition can be changed to any other type of boundary. The following fact is well-known, and we include its proof for completeness of the exposition. 

\begin{lem}
\label{lemma}
    Any regular amenable graph $G=(V,E)$ admits a sequence of functions $f_n$ such that 
    $$\|\nabla f_n\|_{\ell^2(E)}=1,~~\|\Delta f_n\|_{\ell^2(V)}\to 0.$$ 
\end{lem}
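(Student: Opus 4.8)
The statement is a standard consequence of amenability: a Følner sequence gives test functions with small Rayleigh-type quotient. The plan is to take the indicator functions of the Følner sets $X_n$ and cut them off smoothly so that $\nabla f_n$ is supported near the boundary while $\Delta f_n$ is controlled by the boundary as well, then normalize.

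\textbf{Step 1: Build a cutoff from the Følner sets.} Given the sequence $X_n$ with $|\partial X_n|/|X_n|\to 0$, I would first pass to a ``thickened'' version. Define the vertex sets $X_n^{(k)}=\{v\in V : d(v,X_n)\le k\}$ for $k=0,1,\dots,m$, where $m=m(n)$ is a slowly growing parameter to be chosen. Each successive annulus $A_k=X_n^{(k)}\setminus X_n^{(k-1)}$ has size at most $D\cdot|\partial X_n|$ times a factor depending on $k$ and the degree $D$; since $G$ is $D$-regular, $|X_n^{(k)}|\le |X_n|+ (\text{polynomial in }k)\cdot|\partial X_n|$, so for $m$ growing slowly enough (e.g.\ $m$ fixed, or $m\to\infty$ with $m^{D}|\partial X_n|=o(|X_n|)$) we still have $|X_n^{(k)}|\sim|X_n|$ for all $k\le m$. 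Now let $g_n$ be the piecewise-linear tent function in the graph distance: $g_n(v)=1$ on $X_n$, $g_n(v)=1-k/m$ on $A_k$ for $1\le k\le m$, and $g_n(v)=0$ outside $X_n^{(m)}$. Then $|\nabla g_n(v,w)|\le 1/m$ on every edge, and $\nabla g_n$ is supported on edges meeting $X_n^{(m)}\setminus X_n$, a set of at most $C_D\, m\,|\partial X_n|$ edges; hence $\|\nabla g_n\|_{\ell^2(E)}^2 \le C_D\, m\,|\partial X_n|/m^2 = C_D|\partial X_n|/m$.

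\textbf{Step 2: Lower bound $\|\nabla g_n\|_{\ell^2}$ and estimate $\|\Delta g_n\|_{\ell^2}$.} Since $g_n$ goes from $1$ to $0$, summing $\nabla g_n$ along any path from inside $X_n$ to outside $X_n^{(m)}$ gives total variation $1$, so $\|\nabla g_n\|_{\ell^2(E)}$ is bounded below by a positive constant depending only on how this drop is distributed — concretely $\|\nabla g_n\|_{\ell^2(E)}^2\ge 1$ is false in general, but $\|\nabla g_n\|_{\ell^2(E)}>0$ certainly, and we only need it nonzero to normalize. For the Laplacian: $\Delta g_n(v)=0$ whenever all neighbours of $v$ lie in the same ``level set'' as $v$, i.e.\ $\Delta g_n$ is supported on vertices within distance $1$ of some annulus boundary, again a set of size $O_D(m|\partial X_n|)$, and $|\Delta g_n(v)|\le D/m$ pointwise; so $\|\Delta g_n\|_{\ell^2(V)}^2 \le C_D' m|\partial X_n|/m^2 = C_D'|\partial X_n|/m$. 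Therefore
\[
\frac{\|\Delta g_n\|_{\ell^2(V)}^2}{\|\nabla g_n\|_{\ell^2(E)}^2}\le \frac{C_D'|\partial X_n|/m}{\|\nabla g_n\|_{\ell^2(E)}^2}.
\]
To make this go to $0$ we need the denominator not to decay faster than $|\partial X_n|/m$. A clean way to arrange this without worrying about the geometry of paths is the standard trick: instead of one tent function, use the family of indicators $\mathbbm{1}_{X_n^{(k)}}$, $k=0,\dots,m-1$, average their Dirichlet energies, and apply a pigeonhole/Cauchy--Schwarz argument (the ``layer cake'' estimate) to find a single level $k$ with $\|\nabla \mathbbm{1}_{X_n^{(k)}}\|_{\ell^2}^2$ small relative to the telescoping lower bound forced by $|X_n^{(m)}|-|X_n|\le $ something manageable. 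Either route works; I will use whichever makes the constants cleanest.

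\textbf{Step 3: Normalize.} Once I have $g_n$ with $\|\nabla g_n\|_{\ell^2(E)}>0$ and $\|\Delta g_n\|_{\ell^2(V)}/\|\nabla g_n\|_{\ell^2(E)}\to 0$, set $f_n=g_n/\|\nabla g_n\|_{\ell^2(E)}$. Then $\|\nabla f_n\|_{\ell^2(E)}=1$ and $\|\Delta f_n\|_{\ell^2(V)}\to 0$, as required. (Regularity of $G$ is used only to make the $\ell^2$ norms on edges versus vertices interchangeable up to the factor $D$ and to keep $|X_n^{(k)}|$ under control; local finiteness alone with a uniform degree bound suffices.)

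\textbf{Main obstacle.} The only real subtlety is Step 2 — ensuring the Dirichlet energy $\|\nabla g_n\|_{\ell^2(E)}$ does not itself decay as fast as, or faster than, $\|\Delta g_n\|_{\ell^2(V)}$, which would make the ratio fail to tend to $0$. The tent function as described has $\|\nabla g_n\|_{\ell^2}^2 \asymp |\partial X_n|/m$ and $\|\Delta g_n\|_{\ell^2}^2 \asymp |\partial X_n|/m$ as well, so naively the ratio is $\Theta(1)$, not $o(1)$. Resolving this requires the extra averaging idea: choosing $m=m(n)\to\infty$ and noting that while the Dirichlet energy is $\asymp|\partial X_n|/m$, a more careful accounting of $\Delta g_n$ — which only "sees" the two edges where the slope changes, not all $m$ annuli — gives $\|\Delta g_n\|_{\ell^2}^2 \asymp |\partial X_n|/m^2$ times the number of "slope-break" vertices, which is $\asymp |\partial X_n|$ near the two ends, yielding an extra factor of $1/m$. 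Pinning down this discrete second-difference estimate correctly is the crux; everything else is bookkeeping with the regularity constant $D$.
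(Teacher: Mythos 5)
There is a genuine gap, and it sits exactly where you flagged it. Your resolution of the ``main obstacle'' rests on the claim that for the tent function $g_n$ (linear in the graph distance from $X_n$) the Laplacian only ``sees'' the slope-break vertices, so that $\|\Delta g_n\|_{\ell^2(V)}^2$ gains an extra factor $1/m$ over the Dirichlet energy. This is false in a general regular graph: for $v\in A_k$ one has $\Delta g_n(v)=\frac{1}{m}(n_+(v)-n_-(v))$, where $n_\pm(v)$ count the neighbours of $v$ in $A_{k\pm 1}$, and linearity in the distance produces cancellation only when $n_+(v)=n_-(v)$. That holds on $\Z$ but already fails on a tree (one neighbour closer, two farther, so $\Delta g_n(v)=1/m$ there), and there is no reason for it to hold near F\o lner sets of a general amenable graph, which may well have exponential growth. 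So generically $\Delta g_n$ is of size $\asymp 1/m$ on \emph{all} annulus vertices, giving $\|\Delta g_n\|^2\asymp\|\nabla g_n\|^2\asymp|\partial X_n|/m$, and the ratio stays $\Theta(1)$ --- exactly the impasse you computed before proposing the fix. The fallback you sketch (pigeonholing over the indicators $\mathbbm{1}_{X_n^{(k)}}$) does not escape it either: for an indicator both $\|\Delta\mathbbm{1}_{X}\|^2$ and $\|\nabla\mathbbm{1}_{X}\|^2$ are comparable to the number of boundary edges, so the quotient is again $\Theta(1)$. (A secondary slip: $|A_k|$ can grow like $D^{k}|\partial X_n|$, not polynomially in $k$, so even your edge counts need care.)

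The difficulty is structural: the assertion $\inf\|\Delta f\|^2/\|\nabla f\|^2=0$ is a spectral statement, not a direct consequence of the F\o lner property via explicit cutoffs, and this is how the paper proves it. The normalized indicators $h_n=\mathbbm{1}_{X_n}/\sqrt{|X_n|}$ show only that $0\in\sigma(\Delta)$ on $\ell^2(V)$; the maximum principle rules out a zero eigenfunction; hence for every $\varepsilon>0$ the spectral projection onto $(0,\varepsilon]$ is nonzero, and any $f_\varepsilon$ in its range satisfies $\|\Delta f_\varepsilon\|^2\le\varepsilon\langle f_\varepsilon,\Delta f_\varepsilon\rangle$, while $\langle f,\Delta f\rangle$ is comparable (up to a degree-dependent constant, using regularity) to $\|\nabla f\|_{\ell^2(E)}^2$; normalizing yields the lemma. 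If you want to salvage your route, you would effectively have to replace the tent function by a spectral cutoff of the form $\varphi(\Delta)h_n$, which is the spectral-theorem argument in disguise; as written, Step 2 does not go through.
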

\begin{proof}
    We first show that the spectrum of the operator $\Delta$ contains 0. This is well-known (sometimes called Buser's inequality), but let us give the proof nonetheless. We take $h_n:=\frac{1}{\sqrt{|X_n|}}\cdot\mathbbm{1}_{X_n}$ to be the normalized characteristic functions of the sets~$X_n$ from Definition~\ref{def:amenable}. They satisfy 
    $$\|h_n\|_{\ell^2(V)}=1,\quad \|\Delta h_n\|_{\ell^2(V)}^2\le \frac{|\partial X_n|\cdot(\deg (G)+1)}{|X_n|}\to 0.$$
    Thus indeed $0$ is in the spectrum of $\Delta$ (as an operator on $\ell^2(V)$). Further, by the maximum principle there is no zero eigenfunction. 

    By the spectral theorem, the positive self-adjoint operator $\Delta$ is unitary equivalent to a multiplication operator on $L^2(Y,\nu)$ that multiplies by some non-negative function $F(y)$. The argument above shows that $\nu(F^{-1}((0,\varepsilon]))>0$ for any $\varepsilon>0$. Define $f_\varepsilon$ to be the map of $\mathbbm{1}_{F^{-1}((0,\varepsilon])}$ under the unitary equivalence. Then
    \begin{align*}\langle \Delta f_\varepsilon ,\Delta f_\varepsilon\rangle&=\int_{F^{-1}((0,\varepsilon])}F^2(y)\,d\nu(y)\le \varepsilon
    \int_{F^{-1}((0,\varepsilon])}F(y)\,d\nu(y)\\
    &=\varepsilon\langle f_\varepsilon ,\Delta f_\varepsilon\rangle
    \end{align*}
    where the equalities follow from the unitary equivalence.
    Since $\langle f, \Delta f\rangle\cdot2\deg(G)= \|\nabla f\|_{\ell^2(E)}^2$, this gives us a sequence of functions $f_n$ on $G$ such that 
    \begin{equation*}\|\nabla f_n\|_{\ell^2(E)}=1,~~\|\Delta f_n\|_{\ell^2(V)}\to 0.\qedhere\end{equation*}
\end{proof}

\begin{proof}[Proof of main theorem]
The non-amenable case follows from Piaggio and Lessa \cite{piaggio2016equivalence}. To be more precise, they prove that any stationary random graph for which random walk has positive entropy has an infinite dimensional space of bounded harmonic functions. Since vertex transitive graphs are stationary random graphs and any bounded function is Lipschitz, it remains to show that random walk on any non-amenable graph has positive entropy. To see this, we may assume that the random walk is lazy, and note that by Cheeger's inequality \cite[\S 7.2]{PeteBook} non-amenability implies that the spectral radius  of the random walk is strictly smaller then $1$. It follows that the transition probabilities decay exponentially in the number of steps, and therefore the entropy of the random walk grows linearly with the number of steps i.e.\ it has positive entropy. This finishes the non-amenable case.

We therefore assume that the graph $G=(V,E)$ is amenable. Our proof will be similar to the proof of Shalom and Tao \cite{Shalom-Tao, TaoBlog}. 

By a well-known theorem of Soardi and Woess \cite[Corollary 1]{Soardi-Woess}, a vertex-transitive graph $G$ is amenable if and only if its automorphism group $\Aut(G)$ is amenable and unimodular. We write $\mu$ for the Haar measure on $\Aut(G)$, which is bi-invariant by unimodularity, normalized so that $\mu(\Stab(v))=1$ for any $v\in V$.

Let us fix some vertex $o\in V$.
For any $a,b\in V$, denote by $H_{a,b}\subseteq \Aut(G)$ the set of all automorphisms of $G$ that map $a$ to $b$. We will use similar notations for multiple pairs of vertices / edges. Take automorphisms $f_{o,a},f_{b,o}\in\Aut(G)$ so that $f_{o,a}(o)=a$ and $f_{b,o}(b)=o$. Then $\Stab(o)=H_{o,o}=f_{b,o}H_{a,b}f_{o,a}$, so $\mu(H_{a,b})=\mu(\Stab(o))=1$ (since $\mu$ is bi-invariant).

Take $f_n$ from Lemma~\ref{lemma}, note that
$$\sum_{e_0=(o,v)\in E}\sum_{e\in E} |\nabla f_n(e)|^2\cdot\mu(H_{e,e_0})=\|\nabla f_n\|_{\ell^2(E)}^2=1,$$
so we may choose an edge $e_0$ with origin at $o$ such that 
$$\sum_{e\in E} |\nabla f_n(e)|^2\cdot\mu(H_{e,e_0})\ge \frac{1}{\deg(G)}$$
for infinitely many $n$.
After passing to a subsequence we can assume that it holds for all $n$.   

We define new functions $g_n$ by 
$$g_n(v)=\sum_{e\in E}C_{e,n} \int_{H_{e,e_0}}  f_n(T^{-1}v) ~d\mu(T), $$
where $C_{e,n}=\nabla f_n(e)$. By construction 
$$\nabla g_n(e_0)= \sum_{e\in E} |\nabla f_n(e)|^2\cdot\mu(H_{e,e_0})\ge\frac{1}{\deg(G)}.$$
We want to estimate $\|\nabla g_n\|_{\ell^2(E)}$ and $\|\Delta g_n\|_{\ell^2(V)}$. For this purpose define $\Heeo$ to be the set of automorphisms that take the edge $e$ to $e_0$ and the vertex $w$ to $v$. Note that $\mu(\Heeo)\le \mu(H_{e,e_0})\le 1$. Therefore
\begin{align*}
    |\Delta g_n(v)|&\le \sum_{e\in E}|C_{e,n}| \sum_{w\in V} \int_{\Heeo}  |\Delta f_n(T^{-1}v)| ~d\mu(T) \\
    &= \sum_{e\in E}\sum_{w\in V} |C_{e,n}| \cdot \mu\left(\Heeo)\right)\cdot |\Delta f_n(w)|\\
    &\le \|(C_{e,n})_e\|_{\ell^2(E)}\cdot \left\|\left(\mu\left(\Heeo\right)\right)_{e,w} \right\|_{\ell^2(V)\to \ell^2(E)}\cdot \|\Delta f_n\|_{\ell^2(V)}\\
    &\le \|\Delta f_n\|_{\ell^2(V)}\to 0.
\end{align*}
The second inequality follows by using Cauchy-Schwarz for the sequences $(C_{e,n})_e$ and $\sum_{w\in V}  \mu\left(\Heeo)\right)\cdot |\Delta f_n(w)|$  (note that the second norm is the  operator norm).
On the last step we used the Riesz-Thorin inequality $\|A\|^2\le \|A\|_1\cdot \|A\|_{\infty}$ and the fact that 
$$\sum_{e\in E}\mu\left(\Heeo\right)=\mu(H_{w,v})=1,\quad \sum_{w\in V}\mu\left(\Heeo\right)=\mu(H_{e,e_0})\le 1.$$
Define $\widehat{\nabla}$ by
$$\widehat{\nabla}f(v):=\sum_{w\sim v} |f(w)-f(v)|.$$
Denoting by $e^+$ the origin of the edge $e$, the same reasoning gives us
$$|\nabla g_n(e)|\le |\widehat{\nabla}g_n(e^+)|\le \|\widehat{\nabla}f_n\|_{\ell^2(V)}\le \sqrt{\deg(G)}\cdot \|\nabla f_n\|_{\ell^2(E)}=\sqrt{\deg(G)}.$$

Let us sum everything up. We have $\|\nabla g_n\|_{l^\infty(E)}\le \sqrt{\deg(G)}$, 
$\|\Delta g_n\|_{\ell^\infty(V)}\to 0$ and $\nabla g_n(e_0)\ge \nicefrac{1}{\deg(G)}$. After adding constant functions we can assume that $g_n(o)=0$. Since $\nabla g_n$ is bounded, $g_n(v)$ is bounded (with a bound that depends on $v$) and we can use compactness to pass to a subsequence where $g_n$ converges pointwise to some function $g$. It follows that $\|\nabla g\|_{\ell^\infty(E)}\le \sqrt{\deg(G)}$ and $\Delta g=0$. Finally, $|\nabla g(e_0)|>\nicefrac{1}{\deg G}$ so $g$ is not constant. This finishes the proof.
\end{proof}

\section*{Acknowledgements}
During this research G.A.\ and G.B.\ were supported by Israeli Science Foundation grant \#957/20. G.B.\ was also supported by the Bar-Ilan President's Doctoral Fellowships of Excellence. G.K.\ was supported by the Israel Science Foundation grant \#607/21 and by the Jesselson Foundation. M.G.\ was supported by the DFG -- Project-ID 427320536 -- SFB 1442, and under Germany's Excellence Strategy EXC 2044 390685587, Mathematics Münster: Dynamics--Geometry--Structure.
\bibliographystyle{plain}
\bibliography{lip_refs}

\end{document}